%
\documentclass[12pt, reqno]{amsart}
\usepackage{amsmath, amsthm, amscd, amsfonts, amssymb, graphicx}
\usepackage[bookmarksnumbered, colorlinks, plainpages]{hyperref}
\usepackage{mathrsfs}
\textheight 20.5truecm \textwidth 13.5truecm

\oddsidemargin= 12pt \evensidemargin= 12pt \hoffset.5truecm
 \voffset.5truecm

\newtheorem{theorem}{Theorem}[section]

\newtheorem{proposition}[theorem]{Proposition}
\newtheorem{corollary}[theorem]{Corollary}
\newtheorem{remark}[theorem]{Remark}
\theoremstyle{definition}

\numberwithin{equation}{section}

\begin{document}
\title[An Alternative Estimate for $\cdots $]{An Alternative Estimate for the Numerical Radius of Hilbert Space Operators}
\author[M. Shah Hosseini, B. Moosavi \& H.R. Moradi]{Mohsen Shah hosseini$^1$, Baharak moosavi$^2$ and Hamid Reza Moradi$^3$}
\address{$^1$Department of Mathematics, 
Shahr-e-Qods Branch, Islamic Azad University,
Tehran, Iran.} 
\email{mohsen${\_}$shahhosseini@yahoo.com}
\address{$^2$Department  of Mathematics, Safadasht Branch, Islamic Azad University, Tehran,
Iran.}
\email{baharak${\_}$moosavie@yahoo.com}
\address{$^3$Young Researchers and Elite Club, Mashhad Branch, Islamic Azad University, Mashhad, Iran.}
\email{hrmoradi@mshdiau.ac.ir}
\subjclass[2000]{Primary 47A12, Secondary 47A30.}
\keywords{Bounded linear operator, off-diagonal part, norm inequality,
numerical radius.} \maketitle
\begin{abstract}
We give an alternative lower bound for the numerical radii of Hilbert space operators. As a by-product, we find conditions such that 
        \begin{equation*}
\omega\left(\left[\begin{array}{cc} 
0 & R \\
S & 0
\end{array}\right]\right)=\frac{\Vert R \Vert +\Vert S\Vert }{2}
\end{equation*}
where $R, S \in \mathbb{B}(\mathcal{H})$.
\end{abstract}
\section{Introduction and summary}
Let $\mathbb{B}(\mathcal{H})$ denote the ${{C}^{*}}$-algebra of all bounded linear operators on a complex Hilbert
space $\mathcal{H}$ with inner product $\langle\cdot, \cdot\rangle$ and let $ \mathbb{B}^{-1}(\mathcal{H}) $ denote the set of all invertible
operators in $\mathbb{B}(\mathcal{H})$.
 For $T \in \mathbb{B}(\mathcal{H})$, let
\begin{equation*}
\omega \left( T \right)=\sup \left\{ \left| \left\langle Tx,x \right\rangle  \right|:\text{ }x\in \mathcal{H},\left\| x \right\|=1 \right\}
\end{equation*}
and 
\begin{equation*}
\left\| T \right\|=\sup \left\{ \left\| Tx \right\|:\text{ }x\in \mathcal{H},\left\| x \right\|=1 \right\},
\end{equation*}
respectively, denote the numerical radius and operator norm of $T$. Recall that, for all $T \in \mathbb{B}(\mathcal{H})$,
\begin{eqnarray}\label{1.1}
\frac{\Vert T \Vert}{2}\leq \omega(T) \leq \Vert T \Vert.
\end{eqnarray}
For more information and background, we refer to the book by Gustafson and Rao \cite{[3]}. 

Berger \cite{[1]} showed that for any $ T \in \mathbb{B}(\mathcal{H}) $ and natural number $n$,
\begin{equation*} 
\omega( T^n ) \leq  \omega^n(T). 
\end{equation*}
In particular, if $T$ is a normal operator, then
\begin{equation}\label{normal} 
\omega ( T ) = \Vert T\Vert 
\end{equation}
 and 
\begin{equation*} 
\Vert T^ n \Vert = \Vert T \Vert^ n. 
\end{equation*}

Several numerical radius inequalities improving \eqref{1.1} have been recently given in
\cite{[7],[9],[8],1}. 

The following inequality due to Holbrook \cite{[4]} asserts that
\begin{equation}\label{zarb}
\omega( RS ) \leq 4 \omega(R) \omega(S)
\end{equation}
for any $ R,S\in \mathbb{B}(\mathcal{H}) $. 
In the same paper, the author also proved  if  $ RS = SR$, then  
\begin{equation*}
\omega( RS ) \leq 2 \omega(R) \omega(S).
\end{equation*}
 If $R$ and $S$ are operators in $\mathbb{B}(\mathcal{H})$, we write the direct sum $R\oplus S$
for the $2\times2$ operator matrix $\left[\begin{array}{cc}
R & 0 \\
0 & S
\end{array}\right]$, regarded as an operator on $\mathcal{H}\oplus \mathcal{H}$. Thus
\begin{equation*}
 \omega(R\oplus S) =\max(\omega(R) ,\omega(S)).
\end{equation*}
In addition,
\begin{equation}\label{1.5} \Vert R\oplus S \Vert =\left  \Vert \left[\begin{array}{cc}
0 & R  \\
S & 0
\end{array}\right] \right \Vert =\max(\Vert R\Vert ,\Vert S \Vert).
\end{equation}
It is shown in \cite{[18]} that if $ R, S \in \mathbb{B}(\mathcal{H}) $, then
\begin{equation}\label{farei}
\sqrt[2n]{\max(\omega((RS)^n), \omega((SR)^n)}
\leq \omega\left(\left[\begin{array}{cc} 
0 & R \\
S & 0
\end{array}\right]\right)\leq \frac{\Vert R\Vert +\Vert S \Vert}{2}
\end{equation}
for $ n=1, 2,\ldots$.

In this paper, we first prove an alternative estimate for the LHS of \eqref{1.1}. As an application, we improve the inequality \eqref{zarb}. Additionally, we will provide conditions under which the RHS of the inequality \eqref{farei} will change to equality. Our result determines the numerical radius of real  off-diagonal $2\times 2$ matrices. 
\section{Main Results}
Let $T={{T}_{1}}+i{{T}_{2}}$ be the Cartesian decomposition of $T$, where ${{T}_{1}}=\operatorname{Re}T=\frac{T+{{T}^{*}}}{2}$ and ${{T}_{2}}=\operatorname{Im}T=\frac{T-{{T}^{*}}}{2i}$. Then ${{T}_{1}}$ and ${{T}_{2}}$ are self-adjoint. So, $\min \left( \frac{{{\left\| T-{{T}^{*}} \right\|}^{2}}}{2},\frac{{{\left\| T+{{T}^{*}} \right\|}^{2}}}{2} \right)=2\min \left(   \left\| {{T}_{1}} \right\|^2,\left\| {{T}_{2}} \right\|^2 \right)$.\\ For the sake of convenience, we prepare the following notations: 
\[D\left( T \right)=2\min \left( {{\left\| {{T}_{1}} \right\|}^{2}},{{\left\| {{T}_{2}} \right\|}^{2}} \right)~\text{ and }~\alpha \left( T \right)=\underset{\left\| x \right\|=1}{\mathop{\inf }}\,{{\left\| Tx \right\|}^{2}}.\]

The main result of the paper reads as follows.
\begin{theorem}\label{th.asli}
Let $T \in \mathbb{B}(\mathcal{H})$ with the Cartesian decomposition $T={{T}_{1}}+i{{T}_{2}}$. Then 
\begin{equation}\label{asli}
{{\left\| T \right\|}^{2}}+\max \left( \alpha \left( T \right),\alpha \left( {{T}^{*}} \right) \right)\le 2{{\omega }^{2}}\left( T \right)+D\left( T \right).
\end{equation}
\end{theorem}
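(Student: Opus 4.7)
The plan is to leverage the Cartesian decomposition through the identity that relates $\|Tx\|^2 + \|T^*x\|^2$ to $\|T_1x\|^2 + \|T_2x\|^2$, and then use the fact that the operator norms of $T_1$ and $T_2$ are each dominated by $\omega(T)$.

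\textbf{Step 1: Derive the key identity.} I would first compute $T^*T = T_1^2 + T_2^2 + i[T_1,T_2]$ and $TT^* = T_1^2 + T_2^2 - i[T_1,T_2]$, so that $T^*T + TT^* = 2(T_1^2+T_2^2)$. Pairing with a unit vector $x$ (and using self-adjointness of $T_j$) yields
\begin{equation*}
\|Tx\|^2 + \|T^*x\|^2 = 2\bigl(\|T_1x\|^2 + \|T_2x\|^2\bigr).
\end{equation*}

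\textbf{Step 2: Bound $\|T\|^2 + \alpha(T^*)$.} Since $\alpha(T^*) \le \|T^*x\|^2$ for every unit $x$, the identity gives
\begin{equation*}
\alpha(T^*) + \|Tx\|^2 \;\le\; 2\bigl(\|T_1x\|^2+\|T_2x\|^2\bigr) \;\le\; 2\bigl(\|T_1\|^2+\|T_2\|^2\bigr).
\end{equation*}
Taking the supremum over $x$ (noting $\alpha(T^*)$ is a constant) produces $\|T\|^2 + \alpha(T^*) \le 2(\|T_1\|^2+\|T_2\|^2)$.

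\textbf{Step 3: Replace $\max(\|T_1\|^2,\|T_2\|^2)$ by $\omega^2(T)$.} I would invoke the standard fact that $\omega(T)\ge \|T_j\|$ for $j=1,2$: indeed, $|\langle Tx,x\rangle|\ge|\langle T_j x,x\rangle|$ and, since $T_j$ is self-adjoint, $\omega(T_j)=\|T_j\|$ by \eqref{normal}. Writing
\begin{equation*}
2(\|T_1\|^2+\|T_2\|^2) = 2\max(\|T_1\|^2,\|T_2\|^2) + 2\min(\|T_1\|^2,\|T_2\|^2) \le 2\omega^2(T) + D(T),
\end{equation*}
completes the bound $\|T\|^2+\alpha(T^*)\le 2\omega^2(T)+D(T)$.

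\textbf{Step 4: Symmetrize to obtain $\alpha(T)$.} Applying the same argument to $T^*$ in place of $T$ is painless because $\omega(T^*)=\omega(T)$, $\|T^*\|=\|T\|$, and the Cartesian decomposition of $T^*$ is $T_1+i(-T_2)$, so $D(T^*)=D(T)$. This yields $\|T\|^2+\alpha(T)\le 2\omega^2(T)+D(T)$. Taking the maximum of the two estimates gives \eqref{asli}. The only nontrivial ingredient is spotting the identity in Step 1; everything else is bookkeeping, so I do not expect any serious obstacle.
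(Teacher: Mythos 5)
Your proof is correct and is essentially the paper's own argument: your Step 1 identity $\|Tx\|^{2}+\|T^{*}x\|^{2}=2(\|T_{1}x\|^{2}+\|T_{2}x\|^{2})$ is exactly the parallelogram law \eqref{motevazi} applied to $a=Tx$, $b=T^{*}x$ (since $Tx\pm T^{*}x$ equal $2T_{1}x$ and $2iT_{2}x$), and your bound $\|T_{j}\|\le\omega(T)$ is the paper's estimate $\|T+T^{*}\|\le 2\omega(T)$. The only difference is cosmetic: your max/min split in Step 3 replaces the paper's ``replace $T$ by $iT$ and take the minimum of the two resulting inequalities'' maneuver, which is a slightly cleaner way to land on $2\omega^{2}(T)+D(T)$.
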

\begin{proof}
We use the following identity
\begin{equation}\label{motevazi}
\Vert a\Vert ^2 + \Vert b \Vert ^2=\frac{\Vert a-b\Vert ^2+\Vert a+b\Vert ^2}{2}\  \ ( a, b\in \mathcal{H} ).
\end{equation}
 Suppose that $ x \in \mathcal{H} $ with $ \Vert x\Vert= 1 $. Choose 
$ a =T x, b=T^*x $ in \eqref{motevazi} to give
\begin{equation*}
\Vert T x\Vert ^2 + \Vert T^*x \Vert ^2=\frac{1}{2}(\Vert T x-T^*x\Vert ^2+\Vert T x+T^*x\Vert ^2).
\end{equation*}
On account of the definition of $ \alpha(T) $, we infer that
\begin{equation*}
\alpha(T) + \Vert T^*x \Vert ^2\leq \frac{1}{2}(\Vert T x-T^*x\Vert ^2+\Vert T x+T^*x\Vert ^2).
\end{equation*}
By taking the supremum over $x\in \mathcal{H}$ with $ \Vert x\Vert= 1 $, it follows that
\begin{equation*}
\alpha(T) + \Vert T\Vert ^2\leq \frac{1}{2}(\Vert T -T^*\Vert ^2+\Vert T +T^*\Vert ^2).
\end{equation*}
Since $ (T +T^*)$ is normal, \eqref{normal} yields
\begin{equation*}
\Vert T +T^*\Vert \leq 2\omega(T).
\end{equation*}
Therefore,
\begin{equation*}
\alpha(T) + \Vert T\Vert ^2\leq 2 \omega ^2(T)+ \frac{\Vert T -T^*\Vert ^2}{2}.
\end{equation*}
Similarly,
\begin{equation*}
\alpha(T^*) + \Vert T\Vert ^2\leq 2 \omega ^2(T)+ \frac{\Vert T -T^*\Vert ^2}{2}
\end{equation*}
and so 
\begin{equation}\label{shomareh}
{{\left\| T \right\|}^{2}}+\max \left( \alpha \left( T \right),\alpha \left( {{T}^{*}} \right) \right)\le 2{{\omega }^{2}}\left( T \right)+\frac{{{\left\| T-{{T}^{*}} \right\|}^{2}}}{2}.
\end{equation}
Replacing $ T $ by $ iT $, we have 
\begin{equation}\label{shomareh01}
{{\left\| T \right\|}^{2}}+\max \left( \alpha \left( T \right),\alpha \left( {{T}^{*}} \right) \right)\le 2{{\omega }^{2}}\left( T \right)+\frac{{{\left\| T+{{T}^{*}} \right\|}^{2}}}{2}.
\end{equation}
Now, from \eqref{shomareh} and \eqref{shomareh01} we get \eqref{asli}, as required. 
\end{proof}

For an operator $A\in \mathcal{B}\left( \mathcal{H} \right)$, if $\mathcal{H}$ is infinite-dimensional, then ${{\inf }_{\left\| x \right\|=1}}{{\left\| Tx \right\|}^{2}}$ and ${{\inf }_{\left\| x \right\|=1}}{{\left\| {{T}^{*}}x \right\|}^{2}}$ may be different (consider for example the unilateral shift operator). If $\mathcal{H}$ is finite-dimensional, then ${{\inf }_{\left\| x \right\|=1}}{{\left\| Tx \right\|}^{2}}={{\inf }_{\left\| x \right\|=1}}{{\left\| {{T}^{*}}x \right\|}^{2}}$. In this case we can write \eqref{asli} in the following form
\[{{\left\| T \right\|}^{2}}+\underset{\left\| x \right\|=1}{\mathop{\inf }}\,{{\left\| Tx \right\|}^{2}}\le 2{{\omega }^{2}}\left( T \right)+D\left( T \right).\]
It is also interesting to note that if $T$ is invertible, then regardless of the dimension of $\mathcal{H}$, ${{\inf }_{\left\| x \right\|=1}}{{\left\| Tx \right\|}^{2}}={{\inf }_{\left\| x \right\|=1}}{{\left\| {{T}^{*}}x \right\|}^{2}}={{\left\| {{T}^{-1}} \right\|}^{-2}}$.

In the next result we improve the LHS of \eqref{1.1}, thanks to Theorem \ref{asli}.
\begin{proposition}\label{prop}
Let  $T \in \mathbb{B}(\mathcal{H})$ with the Cartesian decomposition $T={{T}_{1}}+i{{T}_{2}}$. Then 
\begin{equation}\label{2}
		\left\| T \right\|\le \sqrt{2{{\omega }^{2}}\left( T \right)-\max \left( \alpha \left( T \right),\alpha \left( {{T}^{*}} \right) \right)+D\left( T \right)}\le 2\omega \left( T \right).
\end{equation}
\end{proposition}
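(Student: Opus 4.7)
The plan is to obtain the two inequalities separately. The first inequality is essentially a restatement of Theorem \ref{th.asli}, while the second requires a short auxiliary estimate that compares $D(T)$ with the numerical radius.

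First I would rearrange the conclusion of Theorem \ref{th.asli}: the inequality \eqref{asli} can be written as
\[
\|T\|^2 \le 2\omega^2(T) - \max(\alpha(T), \alpha(T^*)) + D(T).
\]
Since the right-hand side is nonnegative (as it dominates $\|T\|^2\ge 0$), taking square roots yields the left inequality of \eqref{2}.

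For the right inequality, it suffices to show that
\[
2\omega^2(T) - \max(\alpha(T), \alpha(T^*)) + D(T) \le 4\omega^2(T),
\]
i.e.\ $D(T) - \max(\alpha(T), \alpha(T^*)) \le 2\omega^2(T)$. Since $\alpha(T)$ and $\alpha(T^*)$ are both nonnegative, it is enough to prove $D(T)\le 2\omega^2(T)$. For this, I would use the fact that $T_1$ and $T_2$ are self-adjoint, so by \eqref{normal} we have $\|T_j\| = \omega(T_j)$ for $j=1,2$. Moreover, for any unit vector $x$,
\[
|\langle T_1 x,x\rangle| = |\mathrm{Re}\langle Tx,x\rangle| \le |\langle Tx,x\rangle|,
\]
and similarly for $T_2$, so $\omega(T_j)\le \omega(T)$. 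Combining these gives $\|T_j\|\le \omega(T)$, hence
\[
D(T) = 2\min(\|T_1\|^2, \|T_2\|^2) \le 2\omega^2(T).
\]
Taking square roots of the resulting inequality delivers the right half of \eqref{2}.

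There is no serious obstacle; the whole argument is a direct consequence of Theorem \ref{th.asli} together with the standard bound $\|\mathrm{Re}\, T\|, \|\mathrm{Im}\, T\|\le \omega(T)$ for self-adjoint components. The only thing to be careful about is ensuring that the quantity under the square root remains nonnegative, which it does because it is bounded below by $\|T\|^2$.
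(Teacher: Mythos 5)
Your proposal is correct and follows essentially the same route as the paper: rearrange \eqref{asli} for the left inequality, then establish $D(T)\le 2\omega^2(T)$ to get the right one. The paper obtains that last bound from $\omega^2(T)=\sup_{\|x\|=1}\left(\langle T_1x,x\rangle^2+\langle T_2x,x\rangle^2\right)\ge\min\left(\|T_1\|^2,\|T_2\|^2\right)$, whereas you use $\|T_j\|=\omega(T_j)\le\omega(T)$; these are the same observation in slightly different clothing.
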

\begin{proof}
First of all, we note that
\[{{\omega }^{2}}\left( T \right)=\underset{\left\| x \right\|=1}{\mathop{\sup }}\,{{\left| \left\langle Tx,x \right\rangle  \right|}^{2}}=\underset{\left\| x \right\|=1}{\mathop{\sup }}\,\left( {{\left\langle T_1x,x \right\rangle }^{2}}+{{\left\langle T_2x,x \right\rangle }^{2}} \right).\]
On the other hand,		
\[\begin{aligned}
& \underset{\left\| x \right\|=1}{\mathop{\sup }}\,\left( {{\left\langle {{T}_{1}}x,x \right\rangle }^{2}}+{{\left\langle {{T}_{2}}x,x \right\rangle }^{2}} \right) \\ 
& \ge \underset{\left\| x \right\|=1}{\mathop{\sup }}\,{{\left\langle {{T}_{1}}x,x \right\rangle }^{2}}\left( \text{resp}.\text{ }\ge \underset{\left\| x \right\|=1}{\mathop{\sup }}\,{{\left\langle {{T}_{2}}x,x \right\rangle }^{2}} \right) \\ 
& \ge {{\left\| {{T}_{1}} \right\|}^{2}}\left( \text{resp}.\text{ }\ge {{\left\| {{T}_{2}} \right\|}^{2}} \right) \\ 
& \ge \min \left( {{\left\| {{T}_{1}} \right\|}^{2}},{{\left\| {{T}_{2}} \right\|}^{2}} \right). 
\end{aligned}\]
Consequently,
\begin{equation}\label{3}
D\left( T \right)\le 2{{\omega }^{2}}\left( T \right).
\end{equation}
Combining \eqref{asli} and \eqref{3} we get \eqref{2}, as required.
\end{proof}
Recently in \cite{2}, the authors tried to show
		\begin{equation}\label{1}
		\left\| A \right\|\le \sqrt{2}\omega \left( A \right)	
		\end{equation}
		holds, whenever $A$ is invertible operator. Cain \cite{[2]} by giving a counterexample showed inequality \eqref{1} does not hold, even for invertible operators.
		In the next result, we provide some conditions under which \eqref{1} can be true.
\begin{corollary}
	Let $R \in \mathbb{B}^{-1}(\mathcal{H})$. If $D\left( R \right)\le {{\left\| {{R}^{-1}} \right\|}^{-2}}$,  then
	\begin{equation}\label{radical2}
	\Vert R\Vert \leq \sqrt{ 2} \omega(R).
	\end{equation}
If in addition, $S \in \mathbb{B}^{-1}(\mathcal{H})$ with $D\left( S \right)\le {{\left\| {{S}^{-1}} \right\|}^{-2}}$, then
	\begin{equation}\label{omega zarb 2}
	\omega(RS)\leq 2\omega(R)\omega(S).
	\end{equation}
	\begin{proof}
Replace $T$ in \eqref{asli} with $R$. Since $R$ is invertible the remarks just above Proposition \ref{prop} show that the``max" term in the resulting inequality is equal to $h=\left\| {{R}^{-1}} \right\|^{-2}$. Since we assumed that $h$ dominates $D(T)$ a new inequality can be obtained replacing  $D(T)$ with $h$. To get \eqref{radical2} subtract $h$ from both sides of this new inequality.

For	the inequality \eqref{omega zarb 2}, we can write
		\[\omega \left( RS \right)\le \left\| RS \right\|\le \left\| R \right\|\left\| S \right\|\le 2\omega \left( R \right)\omega \left( S \right)\]
where in the first inequality we used the RHS of \eqref{1.1}, the second inequality follows from the sub-multiplicative property of operator norm, and the third inequality obtains from \eqref{radical2}. 
	\end{proof}
\end{corollary}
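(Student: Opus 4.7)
My approach follows directly from Theorem~\ref{th.asli}, with the invertibility of $R$ entering only through the remark recorded just above Proposition~\ref{prop}.

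The first step is to substitute $T=R$ in inequality~\eqref{asli}, obtaining
\[
\|R\|^{2}+\max\bigl(\alpha(R),\alpha(R^{*})\bigr)\le 2\omega^{2}(R)+D(R).
\]
Because $R$ is invertible, the remark preceding Proposition~\ref{prop} identifies both $\alpha(R)$ and $\alpha(R^{*})$ with $h:=\|R^{-1}\|^{-2}$, so the maximum on the left simplifies to $h$. The hypothesis $D(R)\le h$ then lets me weaken the right-hand side to $2\omega^{2}(R)+h$, and subtracting the common $h$ from both sides yields $\|R\|^{2}\le 2\omega^{2}(R)$, which upon taking square roots is exactly \eqref{radical2}.

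For the second assertion I would simply chain four elementary bounds: $\omega(RS)\le\|RS\|$ by the right half of \eqref{1.1}, $\|RS\|\le\|R\|\|S\|$ by sub-multiplicativity of the operator norm, and finally $\|R\|\le\sqrt{2}\,\omega(R)$ and $\|S\|\le\sqrt{2}\,\omega(S)$ from the part just established (the hypotheses on $R$ and on $S$ being identical in form). Multiplying the two $\sqrt{2}$ factors produces the constant $2$, giving \eqref{omega zarb 2}, a factor-of-two improvement over Holbrook's bound \eqref{zarb} under the stated invertibility and smallness conditions.

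I do not anticipate any genuine obstacle in this proof. The only subtle point worth emphasizing is the identification $\alpha(R)=\alpha(R^{*})=\|R^{-1}\|^{-2}$ for invertible $R$; this is what makes the left- and right-hand appearances of $h$ cancel cleanly in \eqref{asli}. Once that observation is in hand, the rest is direct substitution, the hypothesis on $D(R)$, and routine chaining of standard norm and numerical radius estimates.
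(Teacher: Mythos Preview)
Your proposal is correct and follows essentially the same route as the paper's own proof: substitute $T=R$ in \eqref{asli}, use invertibility to identify the max term with $h=\|R^{-1}\|^{-2}$, invoke the hypothesis $D(R)\le h$ to replace $D(R)$ by $h$, subtract, and then chain $\omega(RS)\le\|RS\|\le\|R\|\|S\|\le 2\omega(R)\omega(S)$ for the second part. There is nothing to add or correct.
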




For convenience, we use the following notation in Corollary \ref{tala}: $$g(T)={{\left\| T \right\|}^{2}}+\max \left( \alpha \left( T \right),\alpha \left( {{T}^{*}} \right) \right)-D\left( T \right).$$
\begin{corollary}\label{tala}
Let $R, S \in \mathbb{B}(\mathcal{H})$ and $ T=\left[\begin{array}{cc} 
0 & R \\
S & 0
\end{array}\right] $. If  ${\left( \left\| R \right\|+\left\| S \right\| \right)}^{2} \le 2 g(T)$, then

\begin{equation*}
\omega(T)=\frac{\Vert R \Vert +\Vert S\Vert }{2}.
\end{equation*}
\end{corollary}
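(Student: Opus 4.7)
The plan is to sandwich $\omega(T)$ between two matching bounds. One direction is already supplied by the RHS of inequality \eqref{farei}, which gives
\[
\omega\left(\begin{bmatrix} 0 & R \\ S & 0 \end{bmatrix}\right) \le \frac{\|R\|+\|S\|}{2}
\]
for free. So the whole task reduces to producing the reverse inequality $\omega(T)\ge \frac{\|R\|+\|S\|}{2}$ under the stated hypothesis.

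To get that reverse inequality, I would apply Theorem \ref{th.asli} directly to the off-diagonal matrix $T$. Rearranging \eqref{asli} gives exactly
\[
g(T) \;=\; \|T\|^{2} + \max\bigl(\alpha(T),\alpha(T^{*})\bigr) - D(T) \;\le\; 2\,\omega^{2}(T).
\]
Combining this with the assumption $(\|R\|+\|S\|)^{2}\le 2g(T)$ yields
\[
(\|R\|+\|S\|)^{2} \le 2g(T) \le 4\,\omega^{2}(T),
\]
so that $\frac{\|R\|+\|S\|}{2}\le \omega(T)$. Chaining this with the upper bound from \eqref{farei} produces the desired equality.

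I expect essentially no obstacle: the proof is a one-line sandwich. The only thing worth a moment of care is that Theorem \ref{th.asli} is being applied to an operator acting on $\mathcal{H}\oplus\mathcal{H}$, with its Cartesian decomposition $T=T_{1}+iT_{2}$ taken in $\mathbb{B}(\mathcal{H}\oplus\mathcal{H})$; this is fine because Theorem \ref{th.asli} was stated for an arbitrary bounded operator on a Hilbert space, and the quantities $\|T\|$, $\alpha(T)$, $\alpha(T^{*})$, $D(T)$, $\omega(T)$ appearing in $g(T)$ are then unambiguously defined on the product space. No further computation with the matrix structure of $T$ is needed, since the hypothesis has already been cast in terms of $g(T)$.
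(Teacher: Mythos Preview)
Your proposal is correct and matches the paper's own argument essentially line for line: combine the hypothesis with the rearranged inequality $g(T)\le 2\omega^{2}(T)$ from Theorem~\ref{th.asli} to obtain $(\|R\|+\|S\|)^{2}\le 4\omega^{2}(T)$, then pair this with the upper bound in \eqref{farei}.
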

\begin{proof}
Clearly ${\left( \left\| R \right\|+\left\| S \right\| \right)}^{2}\leq 4 {{\omega }^{2}}\left( T \right)$, so taking the square root gives  inequality \eqref{farei} reversed.
\end{proof}
\begin{remark}
Let $R, S \in \mathbb{B}(\mathcal{H})$ and $ T=\left[\begin{array}{cc} 
0 & R \\
S & 0
\end{array}\right] $. For this special $T$ we have 
\begin{equation*}
g(T)=\max \left( {{\left\| R \right\|}^{2}},{{\left\| S \right\|}^{2}} \right)+\max \left( \alpha \left( T \right),\alpha \left( {{T}^{*}} \right) \right)-\min \left( \frac{{{\left\| R-{{S}^{*}} \right\|}^{2}}}{2},\frac{{{\left\| R+{{S}^{*}} \right\|}^{2}}}{2} \right),
\end{equation*}
 and if in addition $R$ and $S$ are invertible 
 \begin{equation*}
g(T)=\max \left( {{\left\| R \right\|}^{2}},{{\left\| S \right\|}^{2}} \right)+\min \left({{\left\|{{R}^{-1}}\right\|}^{-2}},{{\left\|{{S}^{-1}}\right\|}^{-2}} \right)-\min\left( \frac{{{\left\| R-{{S}^{*}} \right\|}^{2}}}{2},\frac{{{\left\| R+{{S}^{*}} \right\|}^{2}}}{2} \right)
\end{equation*}
   because the ``max" term becomes $\Vert T^{-1}\Vert ^{-2}=\min \left( {{\left\| {{R}^{-1}} \right\|}^{-2}},{{\left\| {{S}^{-1}} \right\|}^{-2}} \right)$. If $R$ and $S$ are the scalar matrices $rI$ and $sI$ with $r$ and $s$ complex numbers, the inequality assumed valid in Corollary \ref{tala}  takes the form $\min \left( {{\left| r-\overline{s} \right|}^{2}},{{\left| r+\overline{s} \right|}^{2}} \right)\le {{\left( \left| r \right|-\left| s \right| \right)}^{2}}$ (and this is always true if  $r$ and $s$  are real). That is one way to conclude that $\omega(T)=\frac{\Vert R \Vert +\Vert S\Vert }{2}$.  Another way is by computing it from the definition, which is not entirely straight forward.

\end{remark}
\section*{Acknowledgement}
The authors would like to thank an anonymous referee for pointing out a crucial mistake in Theorem \ref{th.asli}.



\end{document}